\documentclass[11pt]{amsart}
\usepackage[top=1.5in, bottom=1in, left=0.9in, right=0.9in]{geometry}

\usepackage{amscd,amsmath,amssymb,fancyhdr,color}
\usepackage[utf8]{inputenc}
\usepackage{amsfonts}
\usepackage{amsthm}
\usepackage{accents}
\usepackage{graphicx}
\usepackage{float}
\usepackage{subcaption}
\usepackage{verbatim}
\usepackage{indentfirst}
\usepackage{dsfont}
\usepackage{tikz}
\usepackage{tikz-cd}
\usetikzlibrary{matrix}
\usepackage[all]{xy}
\usepackage{enumerate}
\usepackage{extarrows}
\usepackage{csquotes}

\usepackage{scalerel,stackengine}
\stackMath
\newcommand\reallywidehat[1]{%
	\savestack{\tmpbox}{\stretchto{%
			\scaleto{%
				\scalerel*[\widthof{\ensuremath{#1}}]{\kern-.6pt\bigwedge\kern-.6pt}%
				{\rule[-\textheight/2]{1ex}{\textheight}}
			}{\textheight}%
		}{0.5ex}}%
	\stackon[1pt]{#1}{\tmpbox}%
}

\usepackage{faktor}

\usepackage{BOONDOX-uprscr}

\usepackage[backref=page]{hyperref}
\renewcommand*{\backref}[1]{}
\renewcommand*{\backrefalt}[4]{%
	\ifcase #1 (Not cited.)%
	\or        (Cited on page~#2.)%
	\else      (Cited on pages~#2.)%
	\fi}

\hypersetup{
	colorlinks   = true,
	citecolor    = magenta
}

\DeclareMathOperator{\Ann}{Ann}

\DeclareMathOperator{\Supp}{Supp}

\DeclareMathOperator{\CL}{CL}

\numberwithin{equation}{section}

\def\eqref#1{(\ref{#1})}

\newcommand{\N}{{\mathbb N}}
\newcommand{\Z}{{\mathbb Z}}
\newcommand{\C}{{\mathbb C}}

\def\1{\sqrt{-1}\:}

\newcommand{\cntrct}                
{\hspace{2pt}\raisebox{1pt}{\text{$\lrcorner$}}\hspace{2pt}}

\newcommand{\ie}{{\em i.e. }}

\renewcommand{\to}{\longrightarrow}

\newcounter{Mycounter}[section]
\newcounter{lemma}[section]
\newcounter{claim}[section]
\newcounter{sublemma}[section]
\newcounter{corollary}[section]
\newcounter{theorem}[section]
\newcounter{conjecture}[section]
\newcounter{proposition}[section]
\newcounter{definition}[section]
\newcounter{example}[section]
\newcounter{remark}[section]
\newcounter{problem}[section]
\newcounter{question}[section]
\makeatletter

\@addtoreset{equation}{section}

\@addtoreset{footnote}{section}

\makeatother

\usetikzlibrary{arrows,chains,matrix,positioning,scopes}

\makeatletter
\tikzset{join/.code=\tikzset{after node path={%
			\ifx\tikzchainprevious\pgfutil@empty\else(\tikzchainprevious)%
			edge[every join]#1(\tikzchaincurrent)\fi}}}
\makeatother

\tikzset{>=stealth',every on chain/.append style={join},
	every join/.style={->}}

\makeatletter
\newtheorem*{rep@theorem}{\rep@title}
\newcommand{\newreptheorem}[2]{%
	\newenvironment{rep#1}[1]{%
		\def\rep@title{\ref{##1}}%
		\begin{rep@theorem}}%
		{\end{rep@theorem}}}
\makeatother

\newreptheorem{theorem}{Theorem}

\begin{document}
	
	\newpage
	
	\title[The Cœuré--Loeb example as an open subset of a Stein space]{The Cœuré--Loeb example as an open subset of a Stein space}

	\author{Ovidiu Preda}
	\address{Ovidiu Preda \newline
		\textsc{\indent University of Bucharest, Faculty of Mathematics and Computer Science\newline 
			\indent 14 Academiei Str., Bucharest, Romania\newline
			\indent \indent and\newline
			\indent Institute of Mathematics ``Simion Stoilow'' of the Romanian Academy\newline 
			\indent 21 Calea Grivitei Street, 010702, Bucharest, Romania}}
	\email{ovidiu.preda@fmi.unibuc.ro; ovidiu.preda@imar.ro}

	\thanks{     \\[.1cm]
		{\bf Keywords:} Stein space, open immersion, holomorphic bundle. \\
		{\bf 2020 Mathematics Subject Classification:} 32E10; 32L05.
	}
	
	\date{\today}

	\begin{abstract}
		The Serre problem asked if a locally trivial holomorphic fibration with Stein base and Stein fiber is itself Stein. \v Skoda constructed the first counterexample for this problem. Later, Cœuré and Loeb constructed another remarkable counterexample, with bounded domain of holomorphy as fiber. The total space of their fibration has global holomorphic functions which separate points and provide local coordinates. However, it does not admit a Stein envelope of holomorphy. In this paper, we prove that the Cœuré--Loeb fibration can be realized as an open subset of a Stein space, more precisely it is the complement of an analytic set in a Stein space. 
	\end{abstract}
	
	\maketitle
	
	\hypersetup{linkcolor=blue}
	\tableofcontents

	\section{Introduction}
	
	Stein spaces are complex spaces which are holomorphically convex and on which global holomorphic functions separate points and give local coordinates. Open subsets of Stein spaces always satisfy the latter two conditions. Thus, a natural question is whether any complex space on which global holomorphic functions separate points and give local coordinates can be realized as an open subset of a Stein space. This problem was studied by Col\c toiu and Joi\c ta \cite{coltoiu_joita}, who constructed two counterexamples, both of which are complex spaces which are locally reducible. The problem of finding a smooth counterexample for the open immersion problem remained open. 
	
	Cœuré and Loeb \cite{coeure_loeb} constructed a locally trivial holomorphic fibration with $\C^*$ as base and a bounded domain of holomorphy in $\C^2$ as fiber, such that the total space is not Stein, thus giving a remarkable new counterexample to the Serre problem, which asks whether a locally trivial holomorphic fibration with Stein base and Stein fiber is always Stein. Zaffran \cite{zaffran2001} generalized the Cœuré--Loeb counterexample to a class of similar fibrations, and showed that they do not admit a Stein envelope of holomorphy. Preda \cite{preda2015} gave a different and much shorter proof of the non-existence of the envelope of holomorphy. Siu’s argument \cite[p.499]{Siu78} implies that global holomorphic functions on the Cœuré--Loeb fibration separate points and provide local coordinates. Col\c toiu \cite[Problem 7]{coltoiu_open_pb}, in connection with the search for a smooth counterexample for the open immersion problem, asked whether the Cœuré--Loeb fibration is isomorphic to an open subset of a Stein space, as it seemed to be a good candidate.
	
	In this article, we prove that the answer is positive; hence, the Cœuré--Loeb fibration is not a counterexample to the open immersion problem in the smooth case.
	
	\medskip
	
	The paper is organized as follows: Section \ref{sec:prelim} contains some important definitions, results, and standard constructions that we need throughout this paper, and Section \ref{sec:main} contains the proof of our main result, split into two parts and structured into several steps, to make it easier to read. It ends with a remark about another problem from Col\c toiu's list \cite{coltoiu_open_pb} related to the Cœuré--Loeb fibration.
	
	\section{Preliminaries}\label{sec:prelim}
	
	Throughout the paper, complex spaces are assumed to be reduced and countable at infinity, and all maps between them are holomorphic, even if not mentioned explicitly, unless stated otherwise. For a complex space $X$, we write $\mathcal O(X):=\Gamma(X,\mathcal O_X)$ for the ring of global holomorphic functions. 
	
	Let $X$ be a complex space and $x\in X$. We say that the functions $f_1,\ldots,f_N\in\mathcal{O}(X)$ \emph{provide local coordinates} at $x$ if the map $F=(f_1,\ldots,f_N):X\rightarrow\C^N$ is a local embedding at $x$. Equivalently, the induced ring morphism $$F_x^{\#}:\mathcal{O}_{\C^N, F(x)}\rightarrow \mathcal{O}_{X,x}$$ is surjective. Another equivalent formulation is that the germs of $f_1-f_1(x),\ldots,f_N-f_N(x)$ at $x$ must span $\mathfrak{m}_x / \mathfrak{m}_x^2$, where $\mathfrak{m}_x$ is the maximal ideal of $\mathcal{O}_{X,x}$. 
	
	An \emph{open immersion} is a holomorphic map which identifies its source biholomorphically with an open complex subspace of its target. 
	
	Let $X$ and $\widetilde X$ be complex spaces. A \emph{modification} is a
	proper surjective holomorphic map $\rho\colon \widetilde X\longrightarrow X$ for which there exists a nowhere-dense closed analytic subset $A\subset X$ such that $\rho^{-1}(A)$ is nowhere dense in $\widetilde X$ and the restriction
	\[
	\rho\colon \widetilde X\setminus\rho^{-1}(A)
	\longrightarrow X\setminus A
	\]
	is a biholomorphism. The set $A$ is called a \emph{center} of the modification,
	and $\rho^{-1}(A)$ is the corresponding \emph{exceptional set}.
	
	A \emph{Stein envelope of holomorphy} of $X$ is a holomorphic map $\alpha\colon X\to\widehat X$, where $\widehat X$ is Stein, such that $\alpha^*\colon\mathcal O(\widehat X)\to\mathcal O(X)$ is an isomorphism and, for every holomorphic map $\varphi\colon X\to Z$ for which $\varphi^*\colon\mathcal O(Z)\to\mathcal O(X)$ is an isomorphism, there exists a unique holomorphic map $\beta\colon Z\to\widehat X$ satisfying $\alpha=\beta\circ\varphi$.
	
	\subsection{Principal additive bundles}
	
	A \emph{principal holomorphic $(\C,+)$-bundle} is a holomorphic map $q:P\rightarrow B$ equipped with a holomorphic $(\C,+)$-action such that, over every member $U$ of a suitable open cover of $B$, it is equivariantly biholomorphic to $U\times\C\rightarrow U$, with the action $t\cdot(x,s)=(x,s+t)$. Choose a trivializing cover $\mathcal U=\{U_i\}$ and fiber coordinates $s_i$ on $q^{-1}(U_i)$. On an overlap they satisfy 
	$$s_i=s_j+c_{ij}, \qquad c_{ij}\in\mathcal O(U_i\cap U_j),$$ 
	where $(c_{ij})$ is a \v{C}ech $1$-cocycle. Its class $\xi(P):=[(c_{ij})]\in H^1(B,\mathcal O_B)$ is the \emph{torsor class} of $P$. Changing the trivializations changes $(c_{ij})$ by a coboundary, and the resulting correspondence identifies isomorphism classes of principal holomorphic $(\C,+)$-bundles over $B$ with $H^1(B,\mathcal O_B)$.
	
	The group $H^1(B,\mathcal O_B)$ is naturally an $\mathcal O(B)$-module. If $f\in\mathcal O(B)$, the class $f\xi(P)$ is represented by $(fc_{ij})$. Consequently, if $f\xi(P)=0$, then, after refining $\mathcal U$ if necessary, there are functions $b_i\in\mathcal O(U_i)$ such that
	$b_i-b_j=fc_{ij}$ on $U_i\cap U_j$. Here we use the standard identification of sheaf cohomology with the direct limit of \v{C}ech cohomology over refinements.
	
	We shall also use the following elementary quotient construction. Suppose that a discrete group $\Gamma$ acts freely and properly discontinuously on a complex space $\widetilde B$, and let $\lambda:\Gamma\to(\C,+)$ be a homomorphism. The diagonal action $\gamma\cdot(x,z)=\bigl(\gamma x,z+\lambda(\gamma)\bigr)$ on $\widetilde B\times\C$ is free and properly discontinuous, and $(\widetilde B\times\C)/\Gamma \longrightarrow\widetilde B/\Gamma$ is a principal holomorphic $(\C,+)$-bundle. Its additive action is $t\cdot[x,z]=[x,z+t]$.
	
	\subsection{Negative-definite cycles and the cusp model}
	
	Let $C=\bigcup_{\nu=1}^{\ell}C_\nu$ be a compact curve in a smooth complex surface, where $C_\nu$ are the irreducible components of $C$. It is \emph{negative definite} if its intersection matrix $$(C_\mu\cdot C_\nu)_{1\leq\mu,\nu\leq\ell}$$
	is negative definite. A \emph{rational normal-crossing cycle} is a normal-crossing curve whose irreducible components are rational and whose
	dual graph is a cycle. Grauert's contraction theorem \cite{grauert} implies that a connected negative-definite curve can be contracted, in a neighborhood of the curve, to an isolated point. After normalization, the target is a normal surface singularity or a normal smooth surface germ. A normal surface singularity whose minimal resolution has a rational cycle or an irreducible rational curve with one node as exceptional divisor is called a \emph{cusp singularity}.
	
	We emphasize that negative definiteness gives a local analytic contraction; by itself, it does not imply that the contraction of an arbitrary noncompact surface is globally Stein. In the special model used below, global Steinness comes from the Hirzebruch--Pinkham cusp construction.
	
	We finally summarize the geometric input from \cite[Sections 2, 6, 7]{zaffran2001}. For the Dloussky matrix $A=\bigl( \begin{smallmatrix} 1 & 1 \\ 1 & 2 \end{smallmatrix} \bigr)$, let $R^d\subset(\C^*)^2$ be Zaffran's invariant Reinhardt domain and let $F(u,v)=(uv,uv^2)$. The automorphism $F$ generates a free and properly discontinuous $\Z$-action on $R^d$. The Cœuré--Loeb example is $E_{\CL}=(R^d\times\C)/\Z$, where the generator acts by $((u,v),z)\longmapsto(F(u,v),z+1)$. Its projection to $R^d/\Z$ is a principal holomorphic $(\C,+)$-bundle.
	
	Zaffran constructs an equivariant partial compactification $\widehat R^d$. The $\Z$-action extends to $\widehat R^d$ and remains free and properly discontinuous, and $\widetilde Y:=\widehat R^d/\Z$ is the resolved cusp obtained from $R^d/\Z$ by adding a compact rational
	normal-crossing cycle $C$. The cycle is negative definite, and its contraction is the cusp point $a$ of a normal Stein surface $Y$; moreover, $\widetilde Y\setminus C \cong R^d/\Z \cong Y\setminus\{a\}$. This is the Hirzebruch--Pinkham Stein completion of the punctured cusp; see also \cite[p.307]{pinkham84}. Moreover,
	$$\widehat E_{\CL}=(\widehat R^d\times\C)/\Z\longrightarrow \widetilde Y$$
	is a principal holomorphic $(\C,+)$-bundle whose restriction over $\widetilde Y\setminus C$ is $E_{\CL}\to R^d/\Z$. Consequently, the torsor class of $E_{\CL}\to R^d/\Z$ is the restriction of the torsor class of $\widehat E_{\CL}\to\widetilde Y$.

	\section{The main result}\label{sec:main}
	
	First, we prove that a principal holomorphic $(\C,+)$-bundle over a punctured normal Stein surface can be realized as an open subset of a Stein space, under the assumption that the torsor class of the bundle is the restriction of a class coming from a modification of the base over the special point. 
	
	This is in the spirit of Hedén's work \cite{heden16}. He studied principal additive $\mathbb{G}_a$-bundles over a punctured normal affine surface $S_*=S\setminus \{\mathbf{x}\}$, and defined an affine extension as a normal affine $\mathbb{G}_a$-variety $X$ over $S$ containing the bundle as the preimage of $S_*$. This is, essentially, the algebraic analogue of our conclusion $P\cong X\setminus \pi^{-1}(a)$ of \ref{prop_extension_H1_class}. The main difference is that Hedén's context is algebraic and affine, assumes the puncture is a regular closed point, and asks for an extension carrying a compatible $\mathbb{G}_a$-action. Our \ref{prop_extension_H1_class} is analytic, allows a normal Stein surface and a modification, and only concludes that the bundle is an open subset of a Stein space, not necessarily Stein itself. 
	
	Brenner's article \cite{brenner13} is also relevant for the construction of a principal bundle over a punctured surface. He explains that a cohomology class on an open set determines an affine-linear bundle, and that a forcing algebra gives a natural completion of the bundle. Brenner's work also contains a warning particularly relevant here: in characteristic zero, over a two-dimensional normal local domain with a non-rational singularity, there are nontrivial $\mathbb{A}^1$-bundles over the punctured spectrum whose classes come from a resolution, but whose total spaces are not affine \cite[Thm.~6.8]{brenner13}. That is a reason why we cannot expect in our setting $P$ to be Stein, and we only prove that it is an open set in a Stein space. 
	
	Finally, Dubouloz, Hedén, and Kishimoto \cite[Sec. 3.3.1]{DuHeKi20} construct, for homogeneous $\mathbb{G}_a$-torsors over $\mathbb{A}_*^2$, an affine extension, and the original torsor is realized as the complement of the fiber above the puncture. This is a concrete algebraic model of the pattern $P=X\setminus \pi^{-1}(a)$ that appears in our result.

	\begin{theorem}\label{prop_extension_H1_class}
		Let $Y$ be a normal Stein surface, let $a\in Y$, denote $Y^*=Y\setminus \{a\}$, and let $\rho:\widetilde{Y}\rightarrow Y$ be a modification such that $\rho:\widetilde{Y}\setminus A \rightarrow Y\setminus\{a\}$ is a biholomorphism, where $A=\rho^{-1}(a)$. Let $q:P\rightarrow Y^*$ be a principal $(\C,+)$-bundle. Assume that the torsor class $\xi\in H^1(Y^*,\mathcal{O}_{Y^*})$ of the bundle $P$ is the restriction, via $\widetilde{Y}\setminus A\cong Y^*$, of a class $\tilde{\xi}\in H^1(\widetilde{Y},\mathcal{O}_{\widetilde{Y}})$. 
		
		Then $P$ is biholomorphic to an open subset of a Stein space. More precisely, there is a Stein space $X$ with a holomorphic map $\pi:X\to Y$ such that $P\cong X\setminus \pi^{-1}(a)$.
	\end{theorem}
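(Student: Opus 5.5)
The idea is to kill the class $\tilde\xi$ by multiplying with a function vanishing at $a$, and then to use that trivialization to build a Stein "blow-down'' of the bundle $\widetilde P\to\widetilde Y$ defined by $\tilde\xi$.

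\emph{Step 1: a function killing $\tilde\xi$.} Since $Y$ is Stein and $\rho$ is proper, $R^1\rho_*\mathcal O_{\widetilde Y}$ is a coherent sheaf supported at $a$, and $H^1(\widetilde Y,\mathcal O_{\widetilde Y})\cong H^0(Y,R^1\rho_*\mathcal O_{\widetilde Y})$ by the Leray spectral sequence and Cartan's Theorem B. This is a finite-dimensional $\mathcal O_{Y,a}$-module of finite length, hence annihilated by $\mathfrak m_a^k$ for some $k$. Choose $f_1,\dots,f_N\in\mathcal O(Y)$ generating $\mathfrak m_a$ at $a$ with common zero set exactly $\{a\}$ (shrinking is unnecessary: by Stein theory we can find such functions on all of $Y$). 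Take $g_1,\dots,g_M$ to be all monomials of degree $k$ in the $f_j$. Then each $g_m$ vanishes only at $a$ collectively, and $(g_m\circ\rho)\tilde\xi=0$.

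\emph{Step 2: global functions on $P$.} Let $\widetilde P\to\widetilde Y$ be the $(\C,+)$-bundle with class $\tilde\xi$, so that $P=\widetilde P|_{\widetilde Y\setminus A}$. Fix a trivializing cover with fiber coordinates $s_i$, $s_i-s_j=c_{ij}$. By the preliminaries, after refinement there are $b^m_i$ with $b^m_i-b^m_j=(g_m\circ\rho)c_{ij}$. The functions
\[
h_m:=(g_m\circ\rho)\,s_i-b^m_i
\]
glue to global holomorphic functions on $\widetilde P$. Consider
\[
\Phi=(\rho\circ p,\ h_1,\dots,h_M):\widetilde P\to Y\times\C^M .
\]
Over $Y^*$, near any point some $g_m\neq0$, and there $h_m$ is affine in $s_i$ with nonzero slope. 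Hence $\Phi|_P$ is a proper injective immersion onto a locally closed set over $Y^*$, which is a closed submanifold of $Y^*\times\C^M$: the fiber coordinate is recovered as $s_i=(h_m+b^m_i)/g_m$.

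\emph{Step 3: the Stein space.} Let $X$ be the closure of $\Phi(P)$ in $Y\times\C^M$, so that $\pi=\mathrm{pr}_1$. I would instead take $X$ as the image of the map on $\widetilde P$ via proper mapping: $\Phi$ is not proper on $\widetilde P$ (fibers over $A$ are noncompact in the $s$-direction only where $g_m\circ\rho$ vanishes). A cleaner route is to use Remmert--Stein: $\Phi(P)$ is a closed analytic subset of $(Y\times\C^M)\setminus(\{a\}\times\C^M)$ of pure dimension $3$, and we remove an analytic set of dimension $M$. This fails when $M\ge 3$, so I would rather show $\Phi(\widetilde P)$ is analytic. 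Its part over $a$ is $\{a\}\times\{-b^m_i|_A\}$, and the $b^m_i|_A$ are locally constant functions on the compact connected fibers, so this part is a finite set of points. The map $\Phi$ is then proper: a compact set in the target bounds $h_m$, and near $A$ the values of $s_i$ stay bounded away from where all $g_m\circ\rho$ vanish. This properness near $A$ is the delicate point, and I may need to replace $\widetilde P$ by a suitable analytic closure. Granting it, Remmert's theorem makes $X=\Phi(\widetilde P)$ a closed analytic subset of the Stein space $Y\times\C^M$, hence Stein. We have $\pi^{-1}(a)=X\cap(\{a\}\times\C^M)$, and $X\setminus\pi^{-1}(a)=\Phi(P)\cong P$. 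If $X$ is not exactly reduced at the seam, we pass to the reduction, which is still Stein.

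The main obstacle is Step 3: showing that the closure of $\Phi(P)$ adds only an analytic set over $a$, with no accumulation from $P$ that is non-analytic. If properness fails, the fallback is to take the closure, normalize it, and argue analyticity via the finitely generated algebra of functions that are polynomial in the fiber and holomorphic over $Y$.
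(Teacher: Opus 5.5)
Your Steps 1 and 2 are sound and agree with the paper. Leray plus Theorem B give $\mathfrak m_a^k\tilde\xi=0$, the degree-$k$ monomials $g_m$ have common zero set $\{a\}$, and your $h_m=(g_m\circ\rho)s_i-b^m_i$ are exactly the paper's $F_\nu$. The gap is Step 3, which carries the real content of the theorem, and the route you sketch there fails.

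Since every $g_m\circ\rho$ vanishes on $A$, on $p^{-1}(A)$ you get $h_m=-b^m|_A$, independent of the fiber coordinate. Moreover $b^m|_A$ is constant on each component of the compact set $A$. So $\Phi$ collapses the noncompact set $p^{-1}(A)$ to finitely many points. Hence $\Phi$ is not proper on $\widetilde P$, and Remmert's theorem is unavailable. Your claim that $s_i$ stays bounded near $A$ contradicts your own parenthetical remark just before it.

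Worse, $\Phi(\widetilde P)$ is not even closed. Let $y\to a$ along points where $|g_{m_0}(y)|=\max_m|g_m(y)|$, and choose $s_i$ so that $h_{m_0}=t$. Then the other $h_m$ stay bounded, so $\overline{\Phi(P)}$ has points over $a$ with arbitrary $m_0$-th coordinate $t$. Thus $X=\Phi(\widetilde P)$ is the wrong set. For $X=\overline{\Phi(P)}$ you have no analyticity argument: Remmert--Stein fails for $M\ge 3$, as you observe, and the normalization fallback is only a hope.

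The missing idea is to avoid closures entirely and cut out $X$ by explicit global equations, as the paper does. Define $d_{\mu\nu}:=(g_\mu\circ\rho)b^\nu_i-(g_\nu\circ\rho)b^\mu_i$.
\begin{itemize}
\item These are independent of $i$ because the cocycle terms cancel, so they are global on $\widetilde Y$.
\item They descend to $\mathcal O(Y)$ because $\rho_*\mathcal O_{\widetilde Y}=\mathcal O_Y$. The paper equivalently defines them on $Y^*$ and extends across $a$ by normality.
\item By construction, $g_\mu h_\nu-g_\nu h_\mu+d_{\mu\nu}=0$.
\end{itemize}
Let $X\subset Y\times\C^M$ be the common zero set of the functions $g_\mu T_\nu-g_\nu T_\mu+d_{\mu\nu}$. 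It is a closed analytic subset of a Stein space, hence Stein, and no properness or closure argument is needed.

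To identify $X$ over $Y^*$, put $\hat s_i=(T_\alpha+b^\alpha_i)/g_\alpha$ on $\{g_\alpha\neq0\}$. The equations force three things: $\hat s_i$ is independent of $\alpha$, it satisfies $\hat s_i-\hat s_j=c_{ij}$, and $T_\nu=g_\nu\hat s_i-b^\nu_i$. This gives an explicit inverse to $\Phi$ and proves $X\setminus\pi^{-1}(a)=\Phi(P)\cong P$.

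Note that $\pi^{-1}(a)=\{a\}\times\C^M$, since $g_\mu(a)=d_{\mu\nu}(a)=0$. This is harmless, because the theorem does not ask for $X$ to be the closure of $P$. Incidentally, it also shows that $\overline{\Phi(P)}$ is analytic: it is the union of the irreducible components of $X$ not contained in $\{a\}\times\C^M$. But you only get this after having the equations.
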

	\begin{proof}
		For clarity, we organize the proof into several steps. 
		
		\vspace{5pt}
		
		\hspace{15pt} \textit{Step 1.} We prove that the class $\xi$ is killed by a sufficiently high power of the maximal ideal $\mathfrak{m}_a\subset \mathcal{O}_{Y,a}$. Since 
		$\rho:\widetilde{Y}\rightarrow Y$ is a proper modification and $Y$ is normal, we have $\rho_*\mathcal{O}_{\widetilde{Y}}=\mathcal{O}_Y$. The low-degree Leray exact sequence for $\rho$ gives 
		$$0\to H^1(Y,\rho_*\mathcal{O}_{\widetilde{Y}})\to H^1(\widetilde{Y},\mathcal{O}_{\widetilde{Y}})\to H^0(Y, R^1\rho_*\mathcal{O}_{\widetilde{Y}})\to H^2(Y,\rho_*\mathcal{O}_{\widetilde{Y}}).$$
		Since $Y$ is Stein, by Cartan's theorem B we get $H^1(Y,\mathcal{O}_Y)=H^2(Y,\mathcal{O}_Y)=0$. Hence, the exact sequence implies that $H^1(\widetilde{Y},\mathcal{O}_{\widetilde{Y}})\cong H^0(Y, R^1\rho_*\mathcal{O}_{\widetilde{Y}})$. Also, Grauert's direct image theorem ensures that $R^1\rho_*\mathcal{O}_{\widetilde{Y}}$ is a coherent $\mathcal{O}_Y$-module. Since $\rho$ is biholomorphic over $Y^*=Y\setminus \{a\}$, this coherent sheaf is supported at the single point $a$. 
		
		Let $\mathfrak{m}_a\subset R:=\mathcal{O}_{Y,a}$ be the maximal ideal and denote $M:=(R^1\rho_*\mathcal{O}_{\widetilde{Y}})_a$. Let $I:=\Ann_R(M)$. Since $M$ is finitely generated, we have 
		$\Supp_{R}(M)=V(\Ann_R(M))=V(I)$.
		However, here if $M\neq 0$, then $\Supp _R(M)=\{\mathfrak{m}_a\}$, so $V(I)=\{\mathfrak{m}_a\}$. This means that $\sqrt{I}=\mathfrak{m}_a$. Since $R$ is Noetherian, this implies that for a sufficiently large $N$, we have $\mathfrak{m}_a^N\subset I$. But $I=\Ann_R(M)$, so this means $\mathfrak{m}_a^N M=0$. If $M=0$, we may take $N=1$. 
		
		Now we return to the class $\tilde{\xi}\in H^1(\widetilde{Y},\mathcal{O}_{\widetilde{Y}})\cong H^0(Y, R^1\rho_*\mathcal{O}_{\widetilde{Y}})$. Because $R^1\rho_*\mathcal{O}_{\widetilde{Y}}$ is zero away from $a$, any $f\in\mathcal{O}(Y)$ whose germ $f_a$ belongs to $\mathfrak{m}_a^N$ annihilates $\xi$ after restriction to $Y^*$, hence we get $f\cdot \tilde{\xi}=0$ in $H^0(Y,R^1\rho_*\mathcal{O}_{\widetilde{Y}})$. Via the Leray identification, $f\cdot \tilde{\xi}=0$ in $H^1(\widetilde{Y},\mathcal{O}_{\widetilde{Y}})$. After restriction to $Y^*$, we get $f_{\restriction{Y^*}}\cdot \xi=0$ in $H^1(Y^*,\mathcal{O}_{Y^*})$. We can write this in a short way as $\mathfrak{m}_a^N\cdot \xi=0$. 
		
		\vspace{5pt}
			
		\hspace{15pt} \textit{Step 2.} We prove that the class $\xi$ is killed by finitely many functions in $\mathcal{O}(Y)$ whose common zero set is exactly $\{a\}$. The required finite family of functions is obtained by a standard application of Cartan's theorems A and B. Let $\mathcal{I}_{\{a\}}\subset\mathcal{O}_Y$ be the coherent ideal sheaf of the analytic set $\{a\}$. By Cartan's theorem A, the stalk $(\mathcal{I}_{\{a\}})_a=\mathfrak{m}_a$ is generated by germs of global sections of $\mathcal{I}_{\{a\}}$. Since this stalk is finitely generated, we may choose $g_1,\ldots,g_m\in H^0(Y,\mathcal{I}_{\{a\}})\subset\mathcal{O}(Y)$ such that $g_{1,a},\ldots,g_{m,a}$ generate $\mathfrak{m}_a$. Next, let
		$$\mathcal{J}:=g_1\mathcal{O}_Y+\cdots+g_m\mathcal{O}_Y,$$
		and let $Z\subset Y$ be the closed complex subspace defined by $\mathcal{J}$. We have $\mathcal{J}_a=\mathfrak{m}_a=(\mathcal{I}_{\{a\}})_a$. Since $\mathcal{I}_{\{a\}}/\mathcal{J}$ is coherent and its stalk at $a$ vanishes, there is a neighborhood $U$ of $a$ on which $\mathcal{J}=\mathcal{I}_{\{a\}}$. Consequently, $Z\cap U=\{a\}$ as complex spaces. Thus, $\{a\}$ is an open-and-closed reduced component of $Z$. Define $\overline{g}\in\mathcal{O}(Z)$ by
		$$
		\overline{g}=0\quad\text{on }\{a\},
		\qquad
		\overline{g}=1\quad\text{on }Z\setminus\{a\}.
		$$
		This is a well-defined holomorphic function because $\{a\}$ is open and closed in $Z$. From the exact sequence
		$$0\longrightarrow\mathcal{J}\longrightarrow\mathcal{O}_Y
		\longrightarrow\mathcal{O}_Z\longrightarrow 0$$
		we get that $\mathcal{J}$ is coherent, and by Cartan's theorem B we obtain $H^1(Y,\mathcal{J})=0$. Hence, the restriction map $\mathcal{O}(Y)\longrightarrow\mathcal{O}(Z)$ is surjective. We may therefore choose $g_{m+1}\in\mathcal{O}(Y)$ whose restriction to $Z$ is $\overline{g}$. Then $(g_{m+1})_a\in\mathcal{J}_a=\mathfrak{m}_a$, whereas $g_{m+1}=1$ on $Z\setminus\{a\}$. It follows that $g_1,\ldots,g_{m+1}$ have the single common zero $a$.
		
		Set $r:=m+1$ and, for $1\leq\nu\leq r$, define $f_\nu:=g_\nu^N$. The common zero set of $f_1,\ldots,f_r$ is still exactly $\{a\}$, and every germ $f_{\nu,a}$ belongs to $\mathfrak{m}_a^N$. Hence, by \textit{Step 1},
		$$f_\nu\cdot\xi=0\quad\text{in }H^1(Y^*,\mathcal{O}_{Y^*}),\qquad 1\leq\nu\leq r.$$
		
		\vspace{5pt}
		
		\hspace{15pt} \textit{Step 3.} We construct finitely many functions in $\mathcal{O}(P)$ which are combinations of base functions and weighted fiber coordinates, and which are used in the next step to define a biholomorphic map of  $P$. To this end, we represent the torsor $q:P\rightarrow Y^*$ by a \v Cech cocycle $(c_{ij})\in Z^1(\mathcal{U},\mathcal{O}_{Y^*})$ on a trivializing open cover $\mathcal{U}=\{U_i\}$ of $Y^*$. Thus, on $q^{-1}(U_i)$ there are fiber coordinates $s_i$ which satisfy $s_i=s_j+c_{ij}$ on $q^{-1}(U_i\cap U_j)$. Since $f_\nu\cdot \xi=0$ and $[(c_{ij})]=\xi\in H^1(Y^*,\mathcal{O}_{Y^*})$, after refining the cover if necessary, there are holomorphic functions $b_{\nu,i}\in\mathcal{O}(U_i)$ such that $b_{\nu,i}-b_{\nu,j}=f_\nu c_{ij}$ on $U_i\cap U_j$. Next, define $F_\nu=f_\nu s_i-b_{\nu,i}$ on $q^{-1}(U_i)$. Here and below, functions on the base are identified with their pullbacks to $P$. These local expressions of $F_\nu$ agree on the intersections $q^{-1}(U_i\cap U_j)$. Indeed, we have 
		$$f_\nu s_i-b_{\nu,i}=f_\nu(s_j+c_{ij})-b_{\nu,i}=f_\nu s_j +f_\nu c_{ij}-b_{\nu,i}=f_\nu s_j-b_{\nu,j},$$
		so $F_\nu\in\mathcal{O}(P)$ is well-defined. Next, for $1\leq \nu, \mu\leq r$, we define on $U_i\subset Y^*$ the functions ${d_{\mu \nu}}_{\restriction U_i}=f_\mu b_{\nu, i}-f_\nu b_{\mu, i}$. This definition is also independent of $i$, because on overlaps we have 
		$$(f_\mu b_{\nu, i}-f_\nu b_{\mu, i})-(f_\mu b_{\nu, j}-f_\nu b_{\mu, j})=f_\mu(b_{\nu,i}-b_{\nu,j})-f_{\nu}(b_{\mu,i}-b_{\mu,j})=f_\mu f_\nu c_{ij}-f_\nu f_\mu c_{ij}=0.$$
		Hence, $d_{\mu \nu}\in\mathcal{O}(Y^*)$. Since $Y$ is normal and $\{a\}$ has codimension $2$ in the surface $Y$, there exists a unique extension $d_{\mu \nu}\in\mathcal{O}(Y)$. 
		
		\vspace{5pt}
		
		\hspace{15pt} \textit{Step 4.} We construct an open immersion of $P$ into a Stein space. Let $T_1,\ldots,T_r$ denote the standard coordinates on $\C^r$, and identify the functions $f_\nu$ and $d_{\mu\nu}$ on $Y$ with their pullbacks to $Y\times\C^r$. Let
		$$
		X=\bigl\{f_\mu T_\nu-f_\nu T_\mu+d_{\mu\nu}=0
		\mid 1\leq\mu,\nu\leq r\bigr\}
		\subset Y\times\C^r
		$$
		be the closed analytic subset defined by these equations, endowed with its reduced complex-space structure. Let $\pi:X\rightarrow Y$ be the restriction of the first projection and put $X_{\restriction Y^*}:=\pi^{-1}(Y^*)$. We shall prove that $X_{\restriction Y^*}$ is biholomorphic to $P$.
		
		Consider the holomorphic map $\Phi:P\rightarrow Y^*\times\C^r$ defined by
		$\Phi(p)=\bigl(q(p),F_1(p),\ldots,F_r(p)\bigr)$. We first verify that $\Phi$ takes its values in $X_{\restriction Y^*}$. If $q(p)\in U_i$, then $F_\nu(p)=f_\nu(q(p))s_i(p)-b_{\nu,i}(q(p))$. Suppressing the evaluation at $q(p)$ and using $d_{\mu\nu}=f_\mu b_{\nu,i}-f_\nu b_{\mu,i}$ on $U_i$, we obtain
		$$
		f_\mu F_\nu-f_\nu F_\mu+d_{\mu\nu}
		=f_\mu(f_\nu s_i-b_{\nu,i})
		-f_\nu(f_\mu s_i-b_{\mu,i})
		+d_{\mu\nu}=0.
		$$
		Thus, $\Phi(P)$ is contained in the common zero set defining $X$, so we may restrict the target space to $\Phi:P\rightarrow X_{\restriction Y^*}$.
		
		We now construct its inverse. For $1\leq\alpha\leq r$, let $V_\alpha:=\{y\in Y^*\mid f_\alpha(y)\neq0\}$. Since the functions $f_1,\ldots,f_r$ have no common zero on $Y^*$, the open sets $V_1,\ldots,V_r$ cover $Y^*$. Over $U_i$, the equations defining $X_{\restriction Y^*}$ can be rewritten as
		$$
		f_\mu(T_\nu+b_{\nu,i})
		=f_\nu(T_\mu+b_{\mu,i}),
		\qquad 1\leq\mu,\nu\leq r.
		$$
		
		On $\pi^{-1}(U_i\cap V_\alpha)$, define
		$\widehat{s}_{i,\alpha}:=(T_\alpha+b_{\alpha,i})/f_\alpha$. This is holomorphic because $f_\alpha$ does not vanish on $V_\alpha$. If both $f_\alpha$ and $f_\beta$ are nonzero, then the preceding equations, with $(\mu,\nu)=(\alpha,\beta)$, give
		$f_\alpha(T_\beta+b_{\beta,i})=f_\beta(T_\alpha+b_{\alpha,i})$. Dividing by $f_\alpha f_\beta$, we obtain $\widehat{s}_{i,\alpha}=\widehat{s}_{i,\beta}$. Hence, as $\alpha$ varies, these functions glue to a holomorphic function $\widehat{s}_i\in\mathcal{O}\bigl(\pi^{-1}(U_i)\bigr)$.
		
		Taking $\mu=\alpha$ in the equations above and dividing by $f_\alpha$, we obtain, for every $\nu$,
		$$
		T_\nu+b_{\nu,i}
		=f_\nu\frac{T_\alpha+b_{\alpha,i}}{f_\alpha}
		=f_\nu\widehat{s}_i,
		\qquad\text{and hence}\qquad
		T_\nu=f_\nu\widehat{s}_i-b_{\nu,i}.
		$$
		
		We next verify compatibility with respect to the index $i$. On $\pi^{-1}(U_i\cap U_j\cap V_\alpha)$, the relation $b_{\alpha,i}-b_{\alpha,j}=f_\alpha c_{ij}$ gives
		$$
		\widehat{s}_i-\widehat{s}_j
		=\frac{T_\alpha+b_{\alpha,i}}{f_\alpha}
		-\frac{T_\alpha+b_{\alpha,j}}{f_\alpha}
		=\frac{b_{\alpha,i}-b_{\alpha,j}}{f_\alpha}
		=c_{ij}.
		$$
		Since the sets $V_\alpha$ cover $Y^*$, it follows that $\widehat{s}_i=\widehat{s}_j+c_{ij}$ on $\pi^{-1}(U_i\cap U_j)$. This is exactly the transition law satisfied by the fiber coordinates $s_i$ of the torsor $P$.
		
		Let $\tau_i:q^{-1}(U_i)\rightarrow U_i\times\C$ be the chosen trivialization, defined by $\tau_i(p)=\bigl(q(p),s_i(p)\bigr)$. On $\pi^{-1}(U_i)$, define
		$\Psi_i(x):=\tau_i^{-1}\bigl(\pi(x),\widehat{s}_i(x)\bigr)$. The transition identity $\widehat{s}_i=\widehat{s}_j+c_{ij}$ shows that the points represented by $\bigl(\pi(x),\widehat{s}_i(x)\bigr)$ and $\bigl(\pi(x),\widehat{s}_j(x)\bigr)$ in the two trivializations are the same point of $P$. Therefore, the maps $\Psi_i$ agree on the overlaps and glue to a holomorphic map $\Psi:X_{\restriction Y^*}\rightarrow P$.
		
		It remains to verify that $\Phi$ and $\Psi$ are inverse to one another. Let $p\in q^{-1}(U_i)$, put $y:=q(p)$, and choose $\alpha$ such that $f_\alpha(y)\neq0$. Since the coordinate $T_\alpha$ of $\Phi(p)$ is $F_\alpha(p)$, we have
		$$
		\widehat{s}_i\bigl(\Phi(p)\bigr)
		=\frac{F_\alpha(p)+b_{\alpha,i}(y)}{f_\alpha(y)}
		=\frac{f_\alpha(y)s_i(p)-b_{\alpha,i}(y)+b_{\alpha,i}(y)}
		{f_\alpha(y)}
		=s_i(p).
		$$
		Thus, $\Psi$ recovers both the base point $y$ and the fiber coordinate $s_i(p)$, and consequently $\Psi\circ\Phi=\operatorname{id}_P$.
		
		Conversely, let $x=(y,T_1,\ldots,T_r)\in\pi^{-1}(U_i)$ and put $p:=\Psi(x)$. By definition, $q(p)=y$ and $s_i(p)=\widehat{s}_i(x)$. Using the formula $T_\nu=f_\nu\widehat{s}_i-b_{\nu,i}$ obtained above, we get, for every $\nu$,
		$$
		F_\nu(p)
		=f_\nu(y)s_i(p)-b_{\nu,i}(y)
		=f_\nu(y)\widehat{s}_i(x)-b_{\nu,i}(y)
		=T_\nu.
		$$
		Hence, $\Phi(p)=x$, so $\Phi\circ\Psi=\operatorname{id}_{X_{\restriction Y^*}}$. We conclude that $\Phi$ is a biholomorphism and that
		$$
		P\cong X_{\restriction Y^*}
		=X\setminus\pi^{-1}(a).
		$$
		
		Finally, $Y$ is Stein, and therefore $Y\times\C^r$ is Stein. Since $X$ is a closed analytic subspace of $Y\times\C^r$, it is Stein. Thus, $P\cong X\setminus\pi^{-1}(a)$ is an open subset of a Stein space.
	\end{proof}
	
	\vspace{10pt}
	
	Now, we prove that the Cœuré--Loeb fibration satisfies the conditions in \ref{prop_extension_H1_class}. The proof collects all the results relevant to us, proved by Zaffran \cite[Sec. 6 and 7]{zaffran2001} about extending the fibration to a partial compactification of the base. The preliminary discussion in Section~\ref{sec:prelim} may be checked by the reader; parts of it are repeated in the proof of the next theorem.

	\begin{theorem}\label{CL_open_immersion}
		The Cœuré--Loeb example $E_{\CL}$ is biholomorphic to an open subset of a Stein space. More accurately, there exists a Stein space $X$ and an analytic subset $Z\subset X$ such that $E_{\CL}\cong X\setminus Z$. 
	\end{theorem}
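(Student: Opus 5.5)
The plan is to deduce the statement directly from Theorem~\ref{prop_extension_H1_class}, applied with the data summarized at the end of Section~\ref{sec:prelim}. The base is the Hirzebruch--Pinkham cusp: let $Y$ be the normal Stein surface with cusp point $a$ and $Y^*=Y\setminus\{a\}\cong R^d/\Z$. The modification is the resolution $\rho\colon\widetilde Y=\widehat R^d/\Z\to Y$ contracting the negative-definite rational cycle $C$, so that $A=\rho^{-1}(a)=C$. The bundle is $P=E_{\CL}\to R^d/\Z\cong Y^*$, which is a principal holomorphic $(\C,+)$-bundle coming from the quotient construction with $\lambda(n)=n$.

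First I will check that $\rho$ is a modification in the sense of Section~\ref{sec:prelim}.
\begin{enumerate}
\item Properness follows because $\rho$ is the Grauert contraction of the compact curve $C$, and it is the identity away from $C$.
\item Surjectivity is clear.
\item The set $C$ is nowhere dense in the surface $\widetilde Y$.
\item The restriction $\widetilde Y\setminus C\to Y\setminus\{a\}$ is the biholomorphism $\widetilde Y\setminus C\cong R^d/\Z\cong Y\setminus\{a\}$ recorded in the preliminaries.
\end{enumerate}
One should make sure that the identification of $Y^*$ used for the bundle is the same as the one realized by $\rho$. I will arrange this by defining $Y$ as the normal Stein contraction of $\widetilde Y$ and taking $\rho$ to be the contraction map itself.

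Next comes the hypothesis on the torsor class. Zaffran's extension $\widehat E_{\CL}=(\widehat R^d\times\C)/\Z\to\widetilde Y$ is a principal $(\C,+)$-bundle, because the $\Z$-action on $\widehat R^d$ is free and properly discontinuous and the quotient construction applies. Its restriction over $\widetilde Y\setminus C$ is exactly $E_{\CL}\to R^d/\Z$, since $R^d\subset\widehat R^d$ is invariant and the action on the $\C$-factor is the same. Restricting cocycles from a trivializing cover of $\widetilde Y$ to $\widetilde Y\setminus C$ therefore shows that $\xi(E_{\CL})$ is the image of $\tilde\xi:=\xi(\widehat E_{\CL})\in H^1(\widetilde Y,\mathcal O_{\widetilde Y})$.

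Theorem~\ref{prop_extension_H1_class} then yields a Stein space $X$ and a map $\pi\colon X\to Y$ with $E_{\CL}\cong X\setminus\pi^{-1}(a)$. Setting $Z:=\pi^{-1}(a)$ completes the proof, since it is the preimage of a point under a holomorphic map and hence an analytic subset of $X$.

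The main obstacle is not formal but lies in the geometric input. One must confirm that Zaffran's partial compactification really yields, after contracting $C$, a \emph{Stein} normal surface $Y$, rather than just a local contraction near $C$; as remarked in Section~\ref{sec:prelim}, negative definiteness alone only gives a local contraction. I would rely on the identification of $R^d/\Z$ with the punctured Hirzebruch cusp, together with the Hirzebruch--Pinkham construction, which realizes the cusp globally as a Stein space. Normality then holds by construction, as the normalization of the contraction.
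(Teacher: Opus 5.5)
Your proposal is correct and follows the paper's own proof: you take $Y$ to be the Stein normal contraction of Zaffran's resolved cusp $\widetilde Y=\widehat R^d/\Z$, note that the torsor class of $E_{\CL}\to R^d/\Z$ is the restriction of that of $\widehat E_{\CL}\to\widetilde Y$, and apply Theorem~\ref{prop_extension_H1_class} with $Z=\pi^{-1}(a)$. Like the paper, you correctly rest the global Steinness of $Y$ on the Hirzebruch--Pinkham cusp construction rather than on negative definiteness alone.
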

	\begin{proof}
		In \cite{zaffran2001}, Zaffran constructs, for a suitable Reinhardt domain $R^d\subset (\C^*)^2$, the quotient $E=(R^d\times \C)/\Z$, where the generator of $\Z$ acts by
		$$R^d\times\C\ni ((u,v),z)\mapsto (F(u,v),z+1),$$
		and $F(u,v)=(u^av^b,u^cv^d)$ is associated with the Dloussky matrix $A=\bigl( \begin{smallmatrix} a & b \\ c & d \end{smallmatrix} \bigr)\in GL_2(\Z)$ (for details on this, one should check \cite[Part 1, p.397]{zaffran2001}).
		Then, he records that $E$ has two fibrations,
		$$E\xrightarrow{R^d} \C/\Z\cong \C^* \quad \text{and} \quad E\xrightarrow{\C} R^d/\Z,$$
		where the fibers are indicated above the arrows. For $A=\bigl( \begin{smallmatrix} 1 & 1 \\ 1 & 2 \end{smallmatrix} \bigr)$, the first fibration is exactly the Cœuré--Loeb example \cite{coeure_loeb}, described in \cite[Sec.6, p.408]{zaffran2001}; we shall denote it by $E_{\CL}$. 
		We are interested only in the second fibration $q:E\rightarrow R^d/\Z$, which is a holomorphic principal $(\C,+)$-bundle. Indeed, addition in the second factor of $R^d\times \C$ descends to a free and transitive fiberwise $(\C,+)$-action on the fibration $E=(R^d\times \C)/\Z$, because the generator changes the second coordinate by an additive constant. 
		
		Zaffran \cite[Sec.7, p.414]{zaffran2001} also constructs a partial compactification $\widehat{R}^d=\psi^{-1}(R^d)\cup \tilde{C}^0$ by adding an infinite string of rational curves, extends the action of $\Z$ to $\widehat{R}^d$, and defines $\widehat{E}=(\widehat{R}^d\times \C)/\Z$. Moreover, $R^d/\Z \hookrightarrow \widehat{R}^d/\Z$, and $\widehat{R}^d/\Z$ is a resolved cusp obtained by adding a compact cycle of rational curves to $R^d/\Z$. 
		
		Next, denote $\widetilde{Y}:=\widehat{R}^d/\Z$ and $C:=\widetilde{Y}\setminus (R^d/\Z)$. Thus, $C$ is the compact cusp cycle. By construction \cite[pp.399--400 and 414--415]{zaffran2001}, this exceptional cycle $C$ is a connected finite union of rational curves with normal crossings with the intersection matrix negative definite, hence it contracts to an isolated normal surface singularity. By \cite[p.307]{pinkham84}, the contracted space is Stein. Thus, there exists a modification $\rho:\widetilde{Y}\rightarrow Y$ onto a normal Stein surface $Y$, with $\rho(C)=\{a\}$ and $\widetilde{Y}\setminus C \cong Y\setminus\{a\}$. Consequently, $Y^*:=Y\setminus\{a\}\cong R^d/\Z$. 
		
		The fibration $\widehat{E}\rightarrow\widehat{R}^d/\Z=\widetilde{Y}$ is also a principal $(\C,+)$-bundle, and its restriction to $Y^*$ is exactly $E\rightarrow Y^*$, which for the Cœuré--Loeb example is $E_{\CL}\rightarrow Y^*$. Therefore, the torsor class $\xi\in H^1(Y^*,\mathcal{O}_{Y^*})$ of $E_{\CL}\rightarrow Y^*$ is the restriction of the torsor class $\tilde{\xi}\in H^1(\widetilde{Y},\mathcal{O}_{\widetilde{Y}})$ of $\widehat{E}\rightarrow \widetilde{Y}$. Now, all the hypotheses of \ref{prop_extension_H1_class} are satisfied. Hence, there exists a Stein space $X$, a holomorphic map $\pi:X\rightarrow Y$, and a biholomorphism $E_{\CL}\cong X\setminus Z$, where $Z=\pi^{-1}(a)$. This proves the theorem.
	\end{proof}
	
	\vspace{15pt}
	
	We end this section with a remark on another problem from Col\c toiu's list \cite{coltoiu_open_pb}, involving the Cœuré--Loeb fibration. 
	
	\begin{remark}
		Let $B$ be a Stein manifold and $\pi:X\rightarrow B$ be a locally trivial holomorphic fibration whose fiber is also a Stein manifold. The Cœuré--Loeb example shows that $X$ need not be Stein.
		\cite[Problem 8]{coltoiu_open_pb} asks whether $X$ is always $p_5$-convex, \ie whether for every sequence of continuous functions $f_n:\overline{\Delta}\rightarrow X$, $n\in \N$, which are holomorphic on $\Delta$ and $\bigcup_{n\in \N}f_n(\partial \Delta)\Subset X$, it follows that $\bigcup_{n\in \N}f_n(\overline{\Delta})\Subset X$. This has a negative answer, as shown by Oeljeklaus and Zaffran \cite[pp.463--464]{OeZaff06} even before the problem list \cite{coltoiu_open_pb} was published. 
	\end{remark}
	
	\vspace{20pt}
	
	\textbf{Acknowledgments.} The author acknowledges the use of general-purpose large language models as an aid in documentation, bibliographic searches, and proof-checking.

\end{document}